\title{A long memory time series with a periodic degree of fractional differencing}
\author{\sc Amine AMIMOUR$^{1}$ \\
and\\	
Karima BELAIDE$^{2}$ \\
Department of Mathematics\\ Applied Mathematics Laboratory\\
University of Bejaia, Bejaia. Algeria \\[1mm]
\href{mailto:amineamimour@gmail.com}{amineamimour@gmail.com}$^{1}$ \\[1mm]
\href{mailto:k\_tim2002@yahoo.fr}{k\_tim2002@yahoo.fr}$^{2}$ \\[1mm]
}
\date{}
\def\EMdash{\leavevmode\hbox to 10.6mm{\vrule height .63ex depth -.59ex
		width 10mm\hfill}}
\theoremstyle{plain}
\numberwithin{equation}{section}
\newtheorem{thm}{Theorem}[section]
\newtheorem{proposition}[thm]{Proposition}
\newtheorem{remark}{Remark}
\begin{document}
	\maketitle
	\begin{abstract}
	This article develops a periodic version of a time varying parameter fractional process in the stationary region. It is a partial extension of Hosking (1981)'s article which dealt with the case where the coefficients are invariant in time. We will describe the probabilistic theories of this periodic model. The results are followed by a graphical representation of the autocovariances functions.
	\end{abstract}
	\textbf{Keywords:} Periodic ARFIMA. Periodic autocovariance. Fractionally process.
	Simulation. Periodically stationary model. Periodic autocorrelation. Long
	memory. 

\section{Introduction and Notation}
Non-stationary periodically correlated processes find their interesting application in many fields such as economics, hydrology as well as environmental studies. Much attention has been given to the periodic autoregressive moving average process. Most of the extant periodic models in the literature are concerned with the identification, estimation and testing problems. The periodic autoregressive moving average model is introduced by Hannan \cite{H1955} to describe rainfall data. Gladyshev \cite{G1961, G1963} proposed a periodically correlated process and gave a necessary and sufficient condition for a series to be periodic. A discrete time process is a periodically correlated process if there is an integer $p>0$ such that,
\begin{center}
	\begin{equation}
	\label{Eq1}
	E(X_{t+p})\equiv E(X_{t})\text{ and }Cov(X_{t+p\text{ }},X_{t^{^{\prime}}+p})=Cov(X_{t},X_{t^{^{\prime }}}).%
	\end{equation}
\end{center}
The properties of a periodic autoregressive model are stated by Troutman \cite{T1979} adopting the associated stationary multivariate autoregressive model. On the other hand, the invertibility property of the periodic moving average models with its elementary concern has been studied by Cipra \cite{C1985}, Ghysels and Hall \cite{G1992}  and Bentarzi and Hallin \cite{BH1994}. Lund et al \cite{LSB2006} studied the techniques for fitting parsimonious periodic time series models to periodic data, considering the periodic autoregressive moving average time series model, see also Box et al \cite{BJR2008} for more background, all of the articles cited above have taken only a short-memory case. From their description by Lawrance and Kottegoda \cite{LK1977}, considerable attention was given to the long memory time series models in the literature, the introduction of this family model is developed by Hosking \cite{H1981}. He generalized the well-known ARIMA(p,d,q) models of Box and Jenkins to the autoregressive fractionally integrated moving average ARFIMA(p,d,q), this generalization consists of permitting the degree of differencing d to take any real value, specifically $0<d<\frac{1}{2}$, this fractionally
differenced process became a standard model to study the behavior of long term persistence processes and is commonly detected in the analysis of real-life time series data in many areas; for example, in finance, in hydrology, meteorology. He also derived explicit expressions for autocovariances $\gamma_{h}^{X}$,  and
autocorrelation $\rho _{h}^{X}=\frac{\gamma _{h}^{X}}{\gamma _{0}^{X}}$ for
the zero mean purely AFRIMA $\left( 0,d,0\right) $ process $\left(
X_{t},t\in 
\mathbb{Z}
\right) $ represented by $\left( 1-B\right) ^{d}X_{t}=\varepsilon _{t}$, and showed that the autocovariance function $\gamma _{h}^{X}$ satisfies 
\begin{center}
	\begin{equation}
	\label{eq1.1}
	\gamma _{h}^{X}\sim Ch^{-\alpha },\text{ }C>0,\text{ }0<\alpha <1,\text{ }si%
	\text{ }h\rightarrow \infty.
	\end{equation}
\end{center}
An extension of this process in the sense of the periodic variation in the differencing parameter d is proposed in this work, for a know period $p$, the PtvARFIMA idea is to drive an ARFIMA equation at a periodically time varying long memory parameter, this model is proposed by Amimour and Belaide\cite{AB2020}, they proved that it enjoys a local asymptotic normality property. In this article, we will provide some results for this new model. Stressing that this model is a generalization of the model proposed by Hosking \cite{H1981} and can be exploited for expressing better the data which present both the periodic structure and the long term dependence patterns. It is more significative also to consider the short memory exponents in the model of the kind. However, Hui and Li \cite{HL1995} proposed a periodically correlated 2-process composed of two parameters of fractional integration based in two white noise independents,
\begin{center}
	\begin{equation}
	\label{Eq3}
	X_{t}^{(1)}=(1-B)^{-d_{1}}\varepsilon _{t}^{(1)}\text{ and }%
	X_{t}^{(2)}=(1-B)^{-d_{2}}\varepsilon _{t}^{(2)},%
	\end{equation}
\end{center}
where $d_{t}$ is the 2-periodic fractional parameter, they are used this model for modeling the Hong Kong United Christian Hospital attendance series, the data concern seventy five weeks (approximately one and half years) on the average number of people entering the emergency unit observed on the weekday and the weekend, mentioning that the ARMA model is not considered because the autocorrelations indicate a long term dependence pattern in this periodic series. The conditional log likelihood function is determined for estimating the periodic memory parameter. This long memory model is a member of the non-stationary class of process (but periodically stationary). \\
The outline of our article is as follows. We describe the periodic fractional model and these characteristics in the second section. In the section three we calculate the periodic autocovariance and autocorrelation function and we study its asymptotic behavior. The section four contains the graphical representations for the autocovarainces functions.

\section{Definitions and notations}
\label{Sec2}
\subsection{Periodically time-varying long memory parameter}
A purely fractionally differenced process  $(X_{t},t\in 
\mathbb{Z}
)$ with period $p$ $($denoted by PtvARFIMA$_{p}$ $(0,d_{t},0))$ has the
following stochastic equation
\begin{center}
	\begin{equation}
	\label{Eq4}
	(1-B)^{d_{i}}X_{i+pm}=\varepsilon
	_{i+pm},%
	\end{equation}
\end{center}
where for all $t\in 
\mathbb{Z}
$, $\exists $ $i=\left\{ 1,........,p\right\} $, $m\in 
\mathbb{Z}
,$ such that $t=i+pm$ and $p$ represents the period $\in 
\mathbb{N}
$, $d_{t}$ is the periodic degree of fractional differencing whose values lie
in $(0,\frac{1}{2})$, and $\left( \varepsilon _{t},t\in 
\mathbb{Z}
\right) $ is a zero mean white noise with finite variance $\sigma
_{t }^{2}$, the variance is periodic in $t$ such that $\sigma^{2}_{t+pm}$ =$\sigma^{2}_{t}$.

If $d_{i}>0$, the process (\ref{Eq4}) is invertible and has an infinite
autoregressive representation is as follows

\begin{center}
	\begin{equation}
	\label{Eq5}
	\varepsilon _{i+pm}=(1-B)^{d_{i}}X_{i+pm}=\underset{}{\overset{}{\overset{%
				\infty }{\underset{j=0}{\sum }}\text{ }\pi _{j}^{i}X_{i+pm-j}}},%
	\end{equation}
\end{center}
where 
\begin{center}
	$\pi _{j}^{i}=\frac{\Gamma (j-d_{i})}{\Gamma (j+1)\Gamma (-d_{i})}.$
\end{center}

If $d_{i}<\frac{1}{2}$, the process (\ref{Eq4}) is causal and has an infinite moving-average representation is as follows

\begin{center}
	\begin{equation}
	\label{Eq6}
	X_{i+pm}=(1-B)^{-d_{i}}\varepsilon _{i+pm}=\overset{\infty }{\underset{j=0}{%
			\sum }}\psi _{j}^{i}\varepsilon _{i+pm-j},%
	\end{equation}
\end{center}
where
\begin{center}
	$\psi _{j}^{i}=\frac{\Gamma (j+d_{i})}{\Gamma (j+1)\Gamma (d_{i})}.$
\end{center}
\begin{equation}
\label{eq1.4}
\psi _{j}^{i}\sim v_{i}j^{d_{i}-1}, v_{i}>0, as\ j\rightarrow \infty .
\end{equation}

\begin{proposition}
	
	\label{pro2.11}
	Let $(X_{t})_{t\in 
		\mathbb{Z}
	}$ a PtvARFIMA$(0,d_{t},0)$ where, for all $t\in 
	\mathbb{Z}
	,$ there exists $m\in 
	\mathbb{Z}
	$ , $p\in 
	\mathbb{N}
	^{\ast }$; $p>1$ is the period and $%
	d_{i+pm}=d_{i}$ with $i=1,....,p,$ such that $t=i+pm$. The PtvARFIMA$(0,d_{i},0)$ is
	causal for all $i=1,....,p,$ then the series 
	
	\begin{equation}
	\label{Eq2.6}
	\underset{j=0}{\overset{\infty }{\sum }}\psi _{j}^{i}B^{j}(\varepsilon
	_{i+pm})=\underset{j=0}{\overset{\infty }{\sum }}\psi _{j}^{i}\varepsilon
	_{i+pm-j},
	\end{equation}
	converge in quadratic mean.
	
\end{proposition}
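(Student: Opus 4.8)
The plan is to show that the sequence of partial sums $S_N := \sum_{j=0}^{N} \psi_j^i \varepsilon_{i+pm-j}$ is Cauchy in $L^2(\Omega)$; since $L^2(\Omega)$ is complete, this gives convergence in quadratic mean. First I would use that $(\varepsilon_t, t\in\Z)$ is a zero-mean white noise, so $E(\varepsilon_s\varepsilon_t)=0$ for $s\neq t$ and $E(\varepsilon_t^2)=\sigma_t^2$. Hence, for $N>M$,
\begin{equation*}
E\bigl| S_N - S_M \bigr|^2 = E\Bigl( \sum_{j=M+1}^{N} \psi_j^i \varepsilon_{i+pm-j} \Bigr)^2 = \sum_{j=M+1}^{N} (\psi_j^i)^2\, \sigma_{i+pm-j}^2 .
\end{equation*}

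Next I would control the variances. Because $\sigma_t^2$ is periodic with period $p$, it takes only finitely many values, so $\sigma^2_{\max} := \max_{1\le k\le p}\sigma_k^2$ is finite and $\sigma_{i+pm-j}^2 \le \sigma^2_{\max}$ for every $j$. Therefore
\begin{equation*}
E\bigl| S_N - S_M \bigr|^2 \le \sigma^2_{\max} \sum_{j=M+1}^{N} (\psi_j^i)^2,
\end{equation*}
and it remains only to prove that $\sum_{j\ge 0} (\psi_j^i)^2 < \infty$. For this I would invoke the asymptotic relation (\ref{eq1.4}), $\psi_j^i \sim v_i j^{d_i-1}$ as $j\to\infty$, which gives $(\psi_j^i)^2 \sim v_i^2 j^{2d_i-2}$. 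Since $d_i\in(0,\tfrac12)$ we have $2-2d_i>1$, so $\sum_j j^{2d_i-2}$ converges by comparison with a convergent $p$-series, and hence so does $\sum_j(\psi_j^i)^2$.

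Combining the two estimates, the tail $\sum_{j=M+1}^{N}(\psi_j^i)^2$ can be made arbitrarily small uniformly in $N>M$ by taking $M$ large, so $(S_N)_N$ is Cauchy in $L^2(\Omega)$ and the series (\ref{Eq2.6}) converges in quadratic mean; its limit is precisely $X_{i+pm}$ by (\ref{Eq6}). The only real input beyond routine Hilbert-space reasoning is the square-summability of the $\psi_j^i$; if one preferred not to quote (\ref{eq1.4}) directly, the same bound $\psi_j^i = O(j^{d_i-1})$ follows from Stirling's formula applied to $\psi_j^i=\Gamma(j+d_i)/\bigl(\Gamma(j+1)\Gamma(d_i)\bigr)$. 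I expect this summability step to be the main (and essentially only) obstacle, while the white-noise orthogonality and the boundedness of the periodic variances are immediate.
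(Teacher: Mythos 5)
Your proposal is correct and follows essentially the same route as the paper: both show the partial sums are Cauchy in $L^{2}$ via the white-noise orthogonality, reducing everything to the square-summability of the $\psi_{j}^{i}$. In fact you are more careful than the paper on two points it glosses over --- you bound the periodically varying noise variances by $\sigma^{2}_{\max}$ instead of writing a single $\sigma_{i}^{2}$, and you actually justify $\sum_{j}(\psi_{j}^{i})^{2}<\infty$ from $\psi_{j}^{i}\sim v_{i}j^{d_{i}-1}$ with $2-2d_{i}>1$, where the paper merely asserts it.
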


\begin{proof}
	
	For all $m\in 
	\mathbb{Z}
	$ , $i=1,.....,p,$ the coefficients $\psi _{j}^{i}$ are given in (\ref{Eq6}), let $t,s$ a positive integers, such that $t<s,$
	we define $S_{t}=\overset{t}{\underset{j=0}{\sum }}$ $\psi
	_{j}^{i}\varepsilon _{i+pm-j},$
	then for $i=1,.....,p.$ we have
	
	\begin{eqnarray*}
		||S_{s}-S_{t}||^{2} &=&E\left[ \left\vert \overset{s}{\underset{v=0}{\sum }}\psi
		_{v}^{i}\varepsilon _{i+pm-v}-\overset{t}{\underset{j=0}{\sum }}\psi
		_{j}^{i}\varepsilon _{i+pm-j}\right\vert ^{2}\right]  \\
		&=&E\left[ \overset{s}{\underset{j=t+1}{\sum }}(\psi
		_{v}^{i})^{2}\varepsilon _{i+pm-v}^{2}+\overset{t}{\underset{\underset{v\neq
					j}{v,j=t+1}}{\sum }}\psi _{v}^{i}\psi _{j}^{i}\varepsilon
		_{i+pm-v}\varepsilon _{i+pm-j}\right]  \\
		&=&\sigma _{i}^{2}\underset{j=t+1}{\overset{s}{\sum }}(\psi _{j}^{i})^{2}.
	\end{eqnarray*}	
	By the Cauchy criterion, one can easily verify that
	$\underset{j=t+1}{\overset{s}{\sum }}(\psi
	_{j}^{i})^{2}<\infty ,$ for all $i=1,...,p.$
\end{proof}
\begin{proposition}
	\label{pro2.12}
	Let $(X_{t})_{t\in 
		\mathbb{Z}
	}$ a PtvARFIMA$(0,d_{t},0)$ where, for all $t\in 
	\mathbb{Z}
	,$ there exists $m\in 
	\mathbb{Z}
	$ , $p\in 
	\mathbb{N}
	^{\ast }$; $p>1$ is the period and $%
	d_{i+pm}=d_{i}$ with $i=1,....,p,$ such that $t=i+pm$. The PtvARFIMA$(0,d_{i},0)$ is
	invertible for all $i=1,....,p,$ then the series 
	
	\begin{equation}
	\underset{j=0}{\overset{\infty }{\sum }}\pi _{j}^{i}B^{j}(X
	_{i+pm})=\underset{j=0}{\overset{\infty }{\sum }}\pi _{j}^{i}X
	_{i+pm-j},
	\end{equation}
	converge in quadratic mean.
\end{proposition}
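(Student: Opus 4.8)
The plan is to reuse the $L^{2}$ Cauchy-criterion scheme from the proof of Proposition \ref{pro2.11}; the one genuinely new feature is that the summands $\pi_{j}^{i}X_{i+pm-j}$ are no longer orthogonal, so the cross terms must be estimated rather than discarded. Fix $i\in\{1,\dots,p\}$ and $m\in\mathbb{Z}$. By Proposition \ref{pro2.11} each $X_{i+pm-j}$ belongs to $L^{2}$, being represented by the convergent moving-average series \eqref{Eq6}; moreover the coefficients $\psi_{\cdot}^{i}$ and the innovation variances $\sigma_{\cdot}^{2}$ depend on their index only through its residue modulo $p$, so $\operatorname{Var}(X_{i+pm})=\gamma_{0}(i)$ does not depend on $m$ and is therefore bounded by $M:=\max_{1\le i\le p}\gamma_{0}(i)<\infty$. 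For positive integers $t<s$ I would set $S_{t}=\sum_{j=0}^{t}\pi_{j}^{i}X_{i+pm-j}$ and aim to prove that $\|S_{s}-S_{t}\|^{2}\to 0$ as $t,s\to\infty$.

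Expanding the square by bilinearity gives
\begin{equation*}
\|S_{s}-S_{t}\|^{2}=\sum_{j=t+1}^{s}\sum_{k=t+1}^{s}\pi_{j}^{i}\pi_{k}^{i}\,E\!\left[X_{i+pm-j}X_{i+pm-k}\right].
\end{equation*}
Unlike in Proposition \ref{pro2.11}, the terms with $j\neq k$ survive. However, the Cauchy--Schwarz inequality together with the uniform variance bound yields $\bigl|E[X_{i+pm-j}X_{i+pm-k}]\bigr|\le M$ for every pair of indices, so that
\begin{equation*}
\|S_{s}-S_{t}\|^{2}\le M\Bigl(\sum_{j=t+1}^{s}\bigl|\pi_{j}^{i}\bigr|\Bigr)^{2}.
\end{equation*}

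It then remains to observe that $\sum_{j\ge 0}|\pi_{j}^{i}|<\infty$, which is exactly where the hypothesis $0<d_{i}<\tfrac12$ is used: from $\pi_{j}^{i}=\Gamma(j-d_{i})/\bigl(\Gamma(j+1)\Gamma(-d_{i})\bigr)$ and Stirling's formula one has $\pi_{j}^{i}\sim j^{-(1+d_{i})}/\Gamma(-d_{i})$ as $j\to\infty$, and since $1+d_{i}>1$ the series converges by comparison with a $p$-series. Consequently $\sum_{j=t+1}^{s}|\pi_{j}^{i}|\to 0$ as $t\to\infty$, uniformly in $s>t$, which closes the Cauchy argument; by completeness of $L^{2}$ the series converges in quadratic mean for every $i=1,\dots,p$ and every $m\in\mathbb{Z}$. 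The step I expect to be the main obstacle — and indeed the only place where the argument departs from Proposition \ref{pro2.11} — is controlling the cross-covariances: orthogonality is no longer available, and it is precisely the absolute (rather than merely square) summability of $\pi_{j}^{i}$, itself a consequence of $d_{i}>0$, that makes the crude estimate $|E[X_{i+pm-j}X_{i+pm-k}]|\le M$ sufficient. A sharper but unnecessary alternative would be to substitute the explicit periodic autocovariances obtained later in the paper and bound the double sum directly.
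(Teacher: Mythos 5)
Your argument is correct, and it is in fact more complete than the paper's own treatment, which consists of the single sentence that ``the proof is similar to'' that of Proposition \ref{pro2.11} and gives no details. The comparison is instructive: a literal transcription of the earlier proof would expand $\|S_{s}-S_{t}\|^{2}$ and discard the cross terms by orthogonality of the innovations, but, as you rightly observe, the summands $\pi_{j}^{i}X_{i+pm-j}$ are correlated, so the whole double sum survives and must be estimated. Your fix --- bounding $|E[X_{i+pm-j}X_{i+pm-k}]|$ uniformly by the maximal (periodic, hence finite) variance via Cauchy--Schwarz, and then invoking the absolute summability $\sum_{j\ge 0}|\pi_{j}^{i}|<\infty$, which follows from $\pi_{j}^{i}\sim j^{-1-d_{i}}/\Gamma(-d_{i})$ and $d_{i}>0$ --- is exactly the missing estimate, and it correctly locates where the invertibility hypothesis enters: the proof of Proposition \ref{pro2.11} needed only square summability of the $\psi_{j}^{i}$ because orthogonality killed the cross terms, whereas here the stronger summability of the $\pi_{j}^{i}$ is what compensates for their loss. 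So while the paper treats the two propositions as interchangeable, your proof makes explicit that the analogy is not literal and supplies the additional step that justifies it.
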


\begin{proof}

	The proof is similar to the first proof of the proposition (\ref{pro2.11}).
\end{proof}
\subsection{Periodically correlated process}

\begin{proposition}
	\label{prop1}
	Let $(X_{t})_{t\in 
		\mathbb{Z}
	}$ a PtvARFIMA$(0,d_{t},0)$ where, for all $t\in 
	\mathbb{Z}
	,$ there exists $m\in 
	\mathbb{Z}
	$ , $p\in 
	\mathbb{N}
	^{\ast }$; $p>1$ is the period and $%
	d_{i+pm}=d_{i}$ with $i=1,....,p,$ such that $t=i+pm$. The PtvARFIMA$(0,d_{i},0)$ is a periodically correlated process.
\end{proposition}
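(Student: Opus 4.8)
The plan is to verify both requirements of the periodic-correlation definition (\ref{Eq1}) directly from the causal moving-average representation (\ref{Eq6}), whose convergence in quadratic mean is supplied by Proposition \ref{pro2.11}. The structural fact that makes everything work is that the coefficients $\psi_j^i$ and the noise variances $\sigma_i^2$ depend on the time index $t=i+pm$ only through the season $i\in\{1,\dots,p\}$, which is left invariant by the shift $t\mapsto t+p$. The first-moment condition is then immediate: the partial sums of (\ref{Eq6}) have mean zero and converge to $X_t$ in $L^2$, hence in $L^1$, so $E(X_t)=0$ for every $t$ and in particular $E(X_{t+p})\equiv E(X_t)$.

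For the covariance I would fix $t=i+pm$ and $t'=i'+pm'$ and, using symmetry, assume $h:=t'-t\ge 0$. Expanding both factors via (\ref{Eq6}) and using continuity of the $L^2$ inner product gives $Cov(X_t,X_{t'})=\sum_{j\ge 0}\sum_{k\ge 0}\psi_j^i\psi_k^{i'}E[\varepsilon_{t-j}\varepsilon_{t'-k}]$. Since $(\varepsilon_t)$ is a white noise, $E[\varepsilon_{t-j}\varepsilon_{t'-k}]$ vanishes unless $t-j=t'-k$, i.e. $k=j+h$, in which case it equals the periodic variance $\sigma_{t-j}^2$, so
\begin{equation}
Cov(X_t,X_{t'})=\sum_{j=0}^{\infty}\psi_j^i\,\psi_{j+h}^{i'}\,\sigma_{t-j}^2 .
\end{equation}
Passing from $(t,t')$ to $(t+p,t'+p)$ leaves the seasons $i,i'$ and the lag $h$ unchanged, and $\sigma_{(t-j)+p}^2=\sigma_{t-j}^2$ by periodicity of the variance, so the right-hand side is reproduced term by term and $Cov(X_{t+p},X_{t'+p})=Cov(X_t,X_{t'})$, which is exactly (\ref{Eq1}).

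The one step requiring care is the interchange of expectation with the double summation, and this is where I expect the only genuine work to lie. I would justify it by absolute convergence: the factor $E[\varepsilon_{t-j}\varepsilon_{t'-k}]$ is bounded in modulus by $\max_{1\le l\le p}\sigma_l^2$ (finite, since there are only $p$ seasons) and is supported on $k=j+h$, so the double series is dominated by $\big(\max_{1\le l\le p}\sigma_l^2\big)\sum_j|\psi_j^i\psi_{j+h}^{i'}|$, which is finite by Cauchy--Schwarz together with the square-summability $\sum_j(\psi_j^i)^2<\infty$ established in Proposition \ref{pro2.11}. Once this is in place, the rest is just bookkeeping of indices modulo $p$.
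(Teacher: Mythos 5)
Your proposal follows essentially the same route as the paper: expand both factors through the causal moving-average representation (\ref{Eq6}), use the white-noise structure to kill the off-diagonal terms, and invoke the $p$-periodicity of the coefficients $\psi_j^i$ and of the noise variance to conclude invariance under the shift $t\mapsto t+p$. In fact your version is the more careful one --- you correctly align the surviving indices as $k=j+h$ and keep the periodic variance $\sigma_{t-j}^2$ inside the sum, whereas the paper's displayed computation pairs the coefficients at equal index $j$ and pulls out a single $\sigma^2$, and you also supply the absolute-convergence justification for interchanging expectation and summation that the paper omits.
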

\begin{proof}
	From the equation (\ref{Eq6}) the proof can be obtained fairly simply, it is easy to
	show that $\psi _{j}^{t+pm}=\psi _{j}^{t}$, is periodic with period $p,$ then
	
	\begin{eqnarray*}
		\gamma _{X}(t+pm,t{^{\prime}}+pm) 
		&=&E(X_{t+pm} X_{t{^{\prime}}+pm})\\
		&=&E(\underset{j=0}{\overset{\infty }{\sum }}\psi _{j}^{t+pm}\varepsilon
		_{t+pm-j} \overset{\infty }{\underset{j=0}{\sum }}\psi
		_{j}^{{t{^{\prime}}+pm}}\varepsilon _{{t{^{\prime}}+pm}-j}) \\
		&=&\underset{j=0}{\overset{\infty }{\sum }}\psi _{j}^{t+pm} \psi_{j}^{{t{^{\prime}}+pm}}E(\varepsilon _{t}^2)\\
		&+&\underset{j \neq i}{\overset{\infty }{\sum }}\psi _{j}^{t+pm} \psi_{j}^{{t{^{\prime}}+pm}}E(\varepsilon _{t-j}\varepsilon
		_{t-i})\\
		&=&\sigma ^{2}\underset{j=0}{\overset{\infty }{\sum }}\psi _{j}^{t} \psi_{j}^{{t{^{\prime}}}}\\
		&=&\gamma _{X}(t,t{^{\prime }}).
	\end{eqnarray*}
\end{proof}

\section{Periodic autocovariance function}
\label{Sec3}

\subsection{Periodic autocovariance function with period two}
\begin{proposition}
	\label{prop2}
	For $p=2$ then $i=1$ or $2$, the autocovariance function of the model \ref{Eq4} is given by
	
	\begin{equation}
	\label{Eq7}
	\gamma _{X}^{i}(h)=\left\{ 
	\begin{array}{c}
	\sigma _{i }^{2}\frac{\Gamma (1-d_{i}-d_{i+1})\Gamma (d_{i+1}+h)}{%
		\Gamma (d_{i+1})\Gamma (1-d_{i+1})\Gamma (1+h-d_{i})}\text{ if h odd} \\ 
	\sigma _{i }^{2}\frac{\Gamma (1-2d_{i})\Gamma (d_{i}+h)}{\Gamma
		(d_{i})\Gamma (1-d_{i})\Gamma (1+h-d_{i})}\text{ if h even}%
	\end{array}%
	\right. ,%
	\end{equation}
\end{proposition}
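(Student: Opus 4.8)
The plan is to compute $\gamma_X^i(h)$ directly from the causal moving-average representation~(\ref{Eq6}) and to recognise the resulting series as a Gauss hypergeometric series evaluated at $1$. Fix the season $i\in\{1,2\}$ and, without loss of generality, take $h\ge 0$ (the case $h<0$ following from the periodic symmetry established in Proposition~\ref{prop1}). Put $t=i+2m$ and let $i'\in\{1,2\}$ be the representative of $i+h$ modulo $2$, so that $i'=i$ when $h$ is even and $i'=i+1$ when $h$ is odd. Since the superscript of $\psi$ is $2$-periodic (as noted in the proof of Proposition~\ref{prop1}), (\ref{Eq6}) gives $X_t=\sum_{j\ge 0}\psi_j^{\,i}\varepsilon_{t-j}$ and $X_{t+h}=\sum_{k\ge 0}\psi_k^{\,i'}\varepsilon_{t+h-k}$, hence
\[
\gamma_X^i(h)=E(X_{t+h}X_t)=\sum_{k\ge 0}\sum_{j\ge 0}\psi_k^{\,i'}\psi_j^{\,i}\,E(\varepsilon_{t+h-k}\varepsilon_{t-j}).
\]
Because $(\varepsilon_t)$ is a white noise, $E(\varepsilon_{t+h-k}\varepsilon_{t-j})$ vanishes unless $k=j+h$, and retaining those terms leaves the single series
\[
\gamma_X^i(h)=\sigma_i^{2}\sum_{j\ge 0}\psi_{j+h}^{\,i'}\,\psi_j^{\,i}.
\]
The interchange of $E$ with the summations is legitimate because~(\ref{eq1.4}) gives $\psi_{j+h}^{\,i'}\psi_j^{\,i}=O(j^{\,d_i+d_{i'}-2})$ with $d_i+d_{i'}<1$, so the series converges absolutely; this is essentially the estimate underlying Proposition~\ref{pro2.11}.

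The core of the proof is then a hypergeometric summation. Writing $(a)_n=\Gamma(a+n)/\Gamma(a)$ for the rising factorial, one has $\psi_j^{\,i}=(d_i)_j/j!$, and using $(d_{i'})_{j+h}=(d_{i'})_h\,(d_{i'}+h)_j$ together with $\Gamma(j+h+1)=h!\,(h+1)_j$ the series rewrites as
\[
\sum_{j\ge 0}\psi_{j+h}^{\,i'}\,\psi_j^{\,i}=\frac{(d_{i'})_h}{h!}\sum_{j\ge 0}\frac{(d_i)_j\,(d_{i'}+h)_j}{(h+1)_j\,j!}=\frac{(d_{i'})_h}{h!}\,{}_2F_1\!\big(d_i,\,d_{i'}+h;\,h+1;\,1\big).
\]
Gauss's summation theorem, ${}_2F_1(a,b;c;1)=\frac{\Gamma(c)\,\Gamma(c-a-b)}{\Gamma(c-a)\,\Gamma(c-b)}$, applies since its hypothesis $\mathrm{Re}(c-a-b)>0$ here reads $1-d_i-d_{i'}>0$, which holds because $d_i,d_{i'}\in(0,\tfrac{1}{2})$. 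Inserting it and simplifying the Gamma factors — in particular $(d_{i'})_h=\Gamma(d_{i'}+h)/\Gamma(d_{i'})$, $\Gamma(c-a)=\Gamma(1+h-d_i)$ and $\Gamma(c-b)=\Gamma(1-d_{i'})$ — gives
\[
\gamma_X^i(h)=\sigma_i^{2}\,\frac{\Gamma(1-d_i-d_{i'})\,\Gamma(d_{i'}+h)}{\Gamma(d_{i'})\,\Gamma(1-d_{i'})\,\Gamma(1+h-d_i)}.
\]
Taking $i'=i+1$ for $h$ odd and $i'=i$ (whence $d_i+d_{i'}=2d_i$) for $h$ even reproduces exactly the two branches of~(\ref{Eq7}).

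I expect the difficulties to be organisational rather than conceptual. The first is keeping track of the superscript of the shifted factor: it must be $i+1$, not $i$, precisely when $h$ is odd, and this is the single point at which the periodic structure of the model genuinely enters — conflating the two parities would merely recover Hosking's formula. The second is carrying out the index shift $k=j+h$ so that $\sum_{j\ge 0}\psi_{j+h}^{\,i'}\psi_j^{\,i}$ is exhibited as a ${}_2F_1$ at argument $1$, and then verifying the hypothesis $1-d_i-d_{i'}>0$ of Gauss's theorem — which is exactly the inequality that the stationarity constraint $0<d_t<\tfrac{1}{2}$ is there to provide. Everything else is routine Gamma-function manipulation of the type already carried out in Hosking~\cite{H1981}.
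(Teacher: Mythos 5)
Your proof is correct and follows essentially the same route as the paper: both reduce $\gamma_X^i(h)$ to the series $\sigma_i^2\sum_{j\ge 0}\psi_j^{\,i}\psi_{j+h}^{\,i+h}$, recognise it as a ${}_2F_1$ evaluated at $1$, and close it with Gauss's summation theorem, with the parity of $h$ selecting the superscript $i$ or $i+1$. You simply supply details the paper leaves implicit (the collapse of the double sum via white-noise orthogonality, absolute convergence, and the hypothesis $1-d_i-d_{i'}>0$ of Gauss's theorem), so no substantive divergence.
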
	
\begin{proof}
	When $i=1$ or $2$, the autocovariance function can be calculated by%
	\begin{equation*}
	\gamma _{X}^{i}(h)=\sigma _{i }^{2}\underset{j=0}{\overset{\infty }%
		{\sum }}\frac{\Gamma (j+d_{i})}{\Gamma (d_{i})\Gamma (j+1)}\frac{\Gamma
		(j+d_{i+h}+h)}{\Gamma (d_{i+h})\Gamma (j+1+h)},
	\end{equation*}
	
	$\gamma _{X}^{i}(h)$ depends to $i$ the model $X_{i+2m\text{ \ }}$is not
	stationary but periodically stationary.(proposition \ref{prop1}),
	by a simpl calculus the autocovariance function may be written as 
	\begin{equation}
	\label{Eq8}
	\gamma _{X}^{i}(h)=\sigma _{i }^{2}\frac{\Gamma (d_{i+h}+h)}{\Gamma (d_{i+h})\Gamma (h+1)}%
	F(d_{i},d_{i+h}+h,1+h,1),%
	\end{equation}
	where F is the hypergeometric function, see Abramowitz and Stegun (\cite{AS1965}, 556),
	next and using the propertie of hypergeometric function, we obtain
	
	\begin{equation}
	\label{Eq9}
	\gamma _{X}^{i}(h)=\sigma _{i }^{2}\frac{\Gamma
		(1-d_{i}-d_{i+h})\Gamma (d_{i+h}+h)}{\Gamma (d_{i+h})\Gamma
		(1-d_{i+h})\Gamma (1+h-d_{i})},%
	\end{equation}
\end{proof}

\begin{remark}
	\label{Rem1}
	We will generalize this result later in the case
	where the period $p$\emph{\ is in }$%
	\mathbb{N}
	.$
\end{remark}

\subsubsection{Asymptotic behavior of autocovariances with period two}

\begin{proposition}
	\label{prop4}
	\begin{equation}
	\label{Eq10}
	\gamma _{X}^{i}(h)\simeq \left\{ 
	\begin{array}{c}
	\sigma _{i }^{2}  \frac{\Gamma (1-d_{i}-d_{i+1})}{\Gamma
		(d_{i+1})\Gamma (1-d_{i+1})}(h)^{d_{i}+d_{i+1}-1}	\ \ if \text{ }h\text{ odd } \\ 
	\sigma _{i }^{2} \frac{\Gamma (1-2d_{i})}{\Gamma (d_{i})\Gamma
		(1-d_{i})}(h)^{2d_{i}-1}\ if \text{ }h\text{ even}%
	\end{array}%
	\right.%
	\end{equation}
	For h large, the components of the autocovariance function ($\gamma
	_{X}^{1}(h),\gamma _{X}^{2}(h))$ decay at a hyperbolic rate.(slowly
	decreases to 0 when h tends to infinity).
\end{proposition}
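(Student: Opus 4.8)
The plan is to feed the closed form of $\gamma_X^i(h)$ from Proposition \ref{prop2} (equation \eqref{Eq7}, equivalently \eqref{Eq9}) into the classical asymptotic relation for ratios of Gamma functions,
\[
\frac{\Gamma(h+a)}{\Gamma(h+b)}\sim h^{\,a-b},\qquad h\to\infty,
\]
which is an immediate consequence of Stirling's formula (see Abramowitz and Stegun \cite{AS1965}). All $h$-dependence of $\gamma_X^i(h)$ in \eqref{Eq7} sits in a single such ratio, so the asymptotics will drop out after extracting the remaining $h$-independent constant.

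First I would handle the odd-$h$ branch. Here the $2$-periodicity of the memory parameter gives $d_{i+h}=d_{i+1}$, and the only $h$-dependent factor in the first line of \eqref{Eq7} is $\Gamma(d_{i+1}+h)/\Gamma(1+h-d_i)=\Gamma(h+d_{i+1})/\Gamma\!\big(h+(1-d_i)\big)$. Applying the displayed relation with $a=d_{i+1}$, $b=1-d_i$ yields $h^{\,d_{i+1}-(1-d_i)}=h^{\,d_i+d_{i+1}-1}$; multiplying by the surviving constant $\sigma_i^2\,\Gamma(1-d_i-d_{i+1})/\big(\Gamma(d_{i+1})\Gamma(1-d_{i+1})\big)$ — which is finite and strictly positive because $0<d_i,d_{i+1}<\tfrac12$ forces $0<d_i+d_{i+1}<1$, so none of the Gamma arguments is a non-positive integer — gives the first line of \eqref{Eq10}. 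The even-$h$ branch is identical with $d_{i+h}=d_i$: the relevant ratio is $\Gamma(d_i+h)/\Gamma(1+h-d_i)=\Gamma(h+d_i)/\Gamma\!\big(h+(1-d_i)\big)\sim h^{\,2d_i-1}$, and multiplying by $\sigma_i^2\,\Gamma(1-2d_i)/\big(\Gamma(d_i)\Gamma(1-d_i)\big)$ produces the second line.

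To conclude the hyperbolic-decay statement, I would observe that $d_i\in(0,\tfrac12)$ for every $i$ implies $d_i+d_{i+1}-1<0$ and $2d_i-1<0$, so in both cases $\gamma_X^i(h)$ equals a positive constant times $h^{-\alpha}$ with $\alpha\in(0,1)$; hence each component of $(\gamma_X^1(h),\gamma_X^2(h))$ tends to $0$ as $h\to\infty$, but only polynomially, i.e. much more slowly than a geometrically decaying (short-memory) autocovariance — this is the advertised long-memory behaviour, matching \eqref{eq1.1}.

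There is no genuine analytic obstacle here; the computation is routine. The only points requiring care are bookkeeping the $2$-periodicity so that the correct parameter ($d_{i+1}$ when $h$ is odd, $d_i$ when $h$ is even) enters the Gamma ratio, and checking that the prefactors $\Gamma(1-d_i-d_{i+1})$ and $\Gamma(1-2d_i)$ are well defined — both guaranteed by the stationarity constraint $0<d_i<\tfrac12$.
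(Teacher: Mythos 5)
Your proposal is correct and follows exactly the paper's own route: the authors likewise start from \eqref{Eq9} and invoke the Stirling-type approximation $\Gamma(d_{i+h}+h)/\Gamma(1+h-d_i)\sim h^{d_i+d_{i+h}-1}$, which specializes to the odd/even branches via the $2$-periodicity of $d$. Your added checks that the constant prefactors are finite and positive and that the exponents lie in $(-1,0)$ are sound and only make the argument more complete than the paper's one-line proof.
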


\begin{proof}
	According to (\ref{Eq9}) and using the standard approximation derived from
	Stirling's formula, that for h large, $\frac{\Gamma (d_{i+h}+h)}{\Gamma
		(1+h-d_{i})}$ is well approximated by $(h)^{d_{i}+d_{i+h}-1}.$ 
\end{proof}

\subsection{Periodic autocovariance function with period p$\in 
	\mathbb{N}
	; p>1$}

Now we will generalize the results that we have facilitated in proposition ~\ref{prop2} when $p\in 
\mathbb{N}
$ $i.e$ $i=1,.......,p$ and $h\equiv k[p]$ , we take the formula~\ref{Eq9}, we have $p$ components 
\begin{center}
	\begin{equation}
	\label{Eq11}
	\gamma _{X}^{i}(h)=\sigma _{i }^{2}\frac{\Gamma
		(1-d_{i}-d_{i+h})\Gamma (d_{i+h}+h)}{\Gamma (d_{i+h})\Gamma
		(1-d_{i+h})\Gamma (1+h-d_{i})},%
	\end{equation}
	
\end{center}
using the standard approximation derived from Stirling's formula, that for
large $h$. It follows that the approximation of $\gamma _{X}^{i}(h)$ is

\begin{center}
	\begin{equation}
	\label{Eq12}
	\gamma _{X}^{i}(h)\simeq \sigma _{i }^{2}\frac{\Gamma
		(1-d_{i}-d_{i+k})}{\Gamma (d_{i+k})\Gamma (1-d_{i+k})}(h)^{d_{i}+d_{i+k}-1}.%
	\end{equation}

\end{center}
Then, 
\begin{center}
	$\gamma_{X}^{i}(h) \sim C_{i}h^{-\alpha_{i}},\ C_{i}>0,\ 0<\alpha_{i} <1,\ as\ h\rightarrow \infty,$
\end{center}
with
\begin{center}
	$\alpha_{i} =-d_{i}-d_{i+k}+1$, $h\equiv k[p],$
\end{center}

and

\begin{equation}
\label{eq1.2}
C_{i}=\sigma_{i} ^{2}\frac{\Gamma
	(1-d_{i}-d_{i+k})}{\Gamma (d_{i+k})\Gamma (1-d_{i+k})}%
.
\end{equation}
\section{Autocorrelation function for a PtvARFIMA model}

\begin{proposition}
	\label{prop5}
	The periodic autocorrelation function with period p$\in 
	\mathbb{N}
	$ (i.e $i=1,.......,p$ and $h\equiv k[p]$) is given by 
	\begin{equation}
	\label{Eq15}
	\rho _{X}^{i}(h)=\frac{[\Gamma (1-d_{i})]^{2}\Gamma (1-d_{i}-d_{i+k})\Gamma
		(d_{i+k}+h)}{\Gamma (1-2d_{i})\Gamma (d_{i+k})\Gamma (1-d_{i+k})\Gamma
		(1+h-d_{i})}.%
	\end{equation}

	\begin{proof}
		
		For p=2, i=1 the autocorrelation function can be calculated by 
		\begin{equation*}
		\rho _{X}^{1}(h)=\frac{\gamma _{X}^{1}(h)}{\gamma _{X}^{1}(0)}=\left\{ 
		\begin{array}{c}
		\frac{\lbrack \Gamma (1-d_{1})]^{2}\Gamma (1-d_{1}-d_{2})\Gamma (d_{2}+h)}{%
			\Gamma (d_{2})\Gamma (1-d_{2})\Gamma (1-2d_{1})\Gamma (1+h-d_{1})}	\text{ if h odd} \\ 
		\frac{\Gamma (1-d_{1})\Gamma (d_{1}+h)}{\Gamma (d_{1})\Gamma (1+h-d_{1})}\text{ if
			h even}%
		\end{array}%
		\right.,
		\end{equation*}
		similary for i =2, then we can deduce the (proposition~\ref{prop5})
	\end{proof}
	
\end{proposition}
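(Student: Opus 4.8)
The plan is to read off $\rho_X^i(h)$ as the ratio $\gamma_X^i(h)/\gamma_X^i(0)$, using the general periodic autocovariance formula (\ref{Eq11}) already established in Proposition \ref{prop2} and its $p$-periodic extension. First I would compute the variance by setting $h=0$ in (\ref{Eq11}). Since $d_{i+0}=d_i$, the numerator becomes $\Gamma(1-d_i-d_i)\,\Gamma(d_i+0)=\Gamma(1-2d_i)\,\Gamma(d_i)$ and the denominator becomes $\Gamma(d_i)\,\Gamma(1-d_i)\,\Gamma(1-d_i)$; the factor $\Gamma(d_i)$ cancels and one obtains
\[
\gamma_X^i(0)=\sigma_i^2\,\frac{\Gamma(1-2d_i)}{[\Gamma(1-d_i)]^{2}}.
\]
Note that $0<d_i<\tfrac12$ guarantees that $1-2d_i$, $1-d_i$ and $d_i$ are all strictly positive, so this quantity is finite and nonzero and the ratio is well defined.

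Next I would invoke the $p$-periodicity of the differencing exponents: writing $h\equiv k\;[p]$ we have $d_{i+h}=d_{i+k}$, so (\ref{Eq11}) reads
\[
\gamma_X^i(h)=\sigma_i^2\,\frac{\Gamma(1-d_i-d_{i+k})\,\Gamma(d_{i+k}+h)}{\Gamma(d_{i+k})\,\Gamma(1-d_{i+k})\,\Gamma(1+h-d_i)}.
\]
Dividing this expression by $\gamma_X^i(0)$, the common factor $\sigma_i^2$ cancels, and the reciprocal of $\Gamma(1-2d_i)/[\Gamma(1-d_i)]^{2}$ appears as an overall multiplicative factor $[\Gamma(1-d_i)]^{2}/\Gamma(1-2d_i)$, producing exactly (\ref{Eq15}). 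To make the argument self-contained (rather than only checking $i=1$ with $p=2$ and asserting ``similarly''), I would point out that the derivation above never used $p=2$ or a particular value of $i$, so the formula holds for all $i=1,\dots,p$ and all $h$; the two branches displayed in the $p=2$ proof are then recovered by taking $k=0$ (even $h$, where the $\Gamma(1-2d_i)$ factors cancel against the denominator and several Gamma terms collapse) and $k=1$ (odd $h$).

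Since the periodic autocovariance is already in hand, every step here is elementary Gamma-function bookkeeping and there is no genuine obstacle; the only points deserving care are (i) correctly tracking the index shift $h\mapsto k$ through the periodicity of $d_{\cdot}$, (ii) verifying that $\gamma_X^i(0)\neq0$ so that the quotient is legitimate, and (iii) confirming that the result, as an autocorrelation must, does not depend on $\sigma_i^2$.
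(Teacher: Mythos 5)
Your proposal is correct and follows essentially the same route as the paper: compute $\gamma_X^i(0)=\sigma_i^2\,\Gamma(1-2d_i)/[\Gamma(1-d_i)]^2$ from the gamma-function expression for the periodic autocovariance and divide $\gamma_X^i(h)$ by it. In fact your version is tighter than the paper's, which only verifies the case $p=2$, $i=1$ and appeals to ``similarly''; you work directly from the general formula (\ref{Eq11}), so the claim for arbitrary $p$ and $i$ follows without any case-checking, and your recovery of the two $p=2$ branches at $k=0$ and $k=1$ is a correct consistency check.
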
		
From the structure of autocovariance or autocorrelation  obtained, we deduce that the PtvARFIMA model is a persistent process, more precisely, the periodic autocorrelation function  $\rho _{X}^{i}(h)$ does not decrease at a geometric rate, but exhibits an asymptotic behavior equivalent to $(h)^{d_{i}+d_{i+k}-1}$, with $h\equiv k[p]$ as $h$ tends to infinity, it means that these correlations are not absolutely summable. In addition, the autocorrelations are expressed as a function of the different long memory parameters, distributed periodically according to the lag $h$.

	\section{The graphical representations}
	
In this section we represent graphically, the periodic autocovariance function for a period $p=2$, with different values of 
	$d_{i}$, for us we show the influence of the memory parameter $d$ on the behavior of the covariance function. We will take two cases, in the first case we take the two different values of d$_{i}$ : $%
	d_{1}=0.3,d_{2}=0.4,$ in the second we take d: $d_{1}=0.09,d_{2}=0.49.$
	\begin{figure}[h!]
		
		\centering
		\includegraphics[width=11cm]{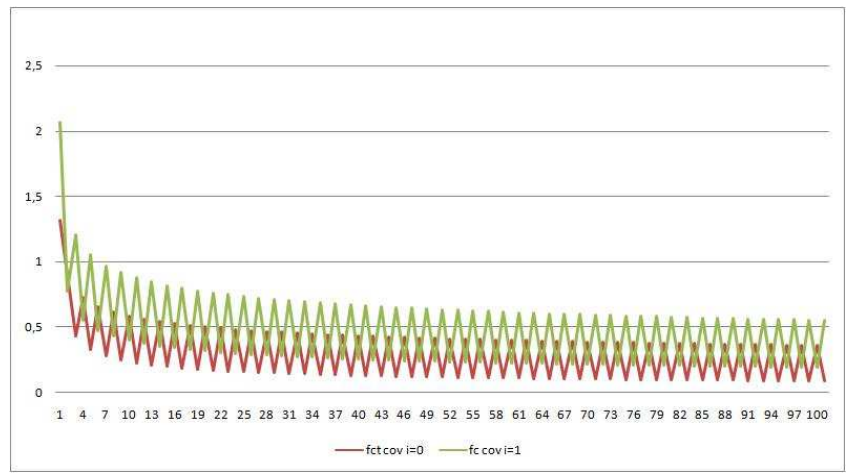}
		\caption{Periodic autocovariance function,
			with period 2 and 	lag $\ h = 0$ to$\ 100,\ d_{1}=0.3,\ d_{2}=0.4.$}	\label{Fig1}
	\end{figure}
	\vspace*{5mm}
	
	The figure(Fig~\ref{Fig1}) , illustrates that the periodicity is caused by the fractional
	parameters $d=(0,3;0,4)$, we see that, $\gamma _{X}^{1}(h)<\gamma
	_{X}^{2}(h) $ because the value of $d_{1}$ is less than the value of $d_{2}$%
	, the periodic autocovariance $\gamma _{X}^{i}(h)$ decrease hyperbolically,
	for the values of $d_{i}$ close to 0, we find that the autocovariance
	function decreases rapidly towards zero.
	
	\begin{figure}[h!]
		
		\centering
		
		\includegraphics[width=11cm]{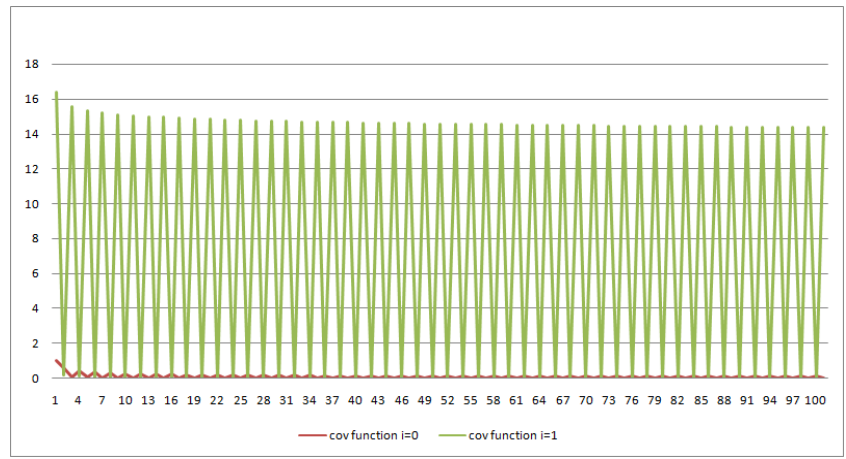}
		\caption{Periodic autocovariance function,
			with period 2 and 	lag $\ h = 0$ to$\ 100,\ d_{1}=0.09,\ d_{2}=0.49.$
		}\label{Fig2}
	\end{figure}
	\vspace*{5mm}
	
	The figure(Fig~\ref{Fig2}) confirms the previous observation because for a value of d
	close to $\frac{1}{2}$ we notice that the autocovariance function decreases
	very slowly, whereas for the value of d very close to 0 we observe a strong
	decay towards 0 for the function of covariance. Between the
	two figures(Fig~\ref{Fig1}) and(Fig~\ref{Fig2}), in the second case a very important shift is observed
	between the functions in figure(Fig~\ref{Fig2})  for the values of d distant ($%
	d_{1}=0.09,d_{2}=0.49$ ), while for the values of d close enough ($%
	d_{1}=0.3,d_{2}=0.4$ ), there is less shift between the functions in Figure(Fig~\ref{Fig1})
	(first case), which shows the influence of the parameters of
	differentiation on the values of the function as well on their behavior. Indicates a long term dependence and the presence of a cycle of different periodicity.
	
\section{Conclusion}
\label{sec:conc}	
The article presents a model inspired by Hosking $\cite{H1981}$, considering the long memory parameter, d, a periodically time-varying function. We have described the asymptotic proprties of the model, and studied the behavior of the autocovariance function, which was illustrated by the graphical representations. The results could be the opening wedge of the new subjects in the periodic ARFIMA models.


\begin{thebibliography}{99}
		\bibitem{AB2020}
		{\sc A. Amimour,  K. Belaide}, Local asymptotic normality for a periodically time varying long memory parameter. {\it Communications in Statistics - Theory and Methods, doi : 10.1080/03610926.2020.1784435}, (2020).		
		\bibitem{AS1965}
		{\sc M. Abramowitz, I. Stegun}, Handbook of Matematical Functions. {\it New York: Dover Publications }, (1965).	
		\bibitem{BH1994}
		{\sc M. Bentarzi,  M. Hallin}, On the invertibility of periodic moving-average models.
		{\it Journal of Time Series Analysis}, {\bf{15}} (1994), pages 263--268.			
		\bibitem{BJR2008}
		{\sc G.E.P. Box, G.M. Jenkins, G.C. Reinsel}, Time Series Analysis: Forecasting and Control. {\it  New Jersey: 4th ed. Wiley.}, (2008).
		\bibitem{C1985}
		{\sc T. Cipra}, Periodic moving average processes. {\it  Aplikace Matematily}, {\bf{30}}(1985a), pages 218--229 , .
		\bibitem{G1992}
		{\sc E. Ghysels, A. Hall}, Testing periodicity in some linear macroeconomic models. {\it  Unpublished manuscript, CRDE, Montreal}, (1992).
		\bibitem{G1961} 
		{\sc E.G. Gladyshev}, Periodically correlated random sequences. {\it Soviet. Mathematics} {\bf{2}} (1961), pages 385--388.	
		\bibitem{G1963}
		{\sc E.G. Gladyshev}, Periodically and almost PC random processes with continuous time parameter. {\it Theory Probability and its Applications}, {\bf{8}} (1963), pages 173–-177.
		\bibitem{G1988}
		{\sc E. Goncalves}, "Processus fractionnaires" [Fractionally processes] phD diss. {\it  University of Science and  Technology Lille Flanders Artois}, (1988).	
		\bibitem{H1955}
		{\sc E.J. Hannan}, A test for singularities in Sydney rainfall. {\it  Australian Journal of Physics}, {\bf{8}}(1955), pages 289--97.
		\bibitem{H1981} 
		{\sc J.R.M. Hosking}, Fractional differencing. {\it Biometrika.}, {\bf{68}} (1981), pages 165--176.
		\bibitem{HL1995} 
		{\sc Y.V. Hui,  W.K. Li}, On fractionally differenced periodic processes. {\it The Indian Journal of Statistics}, {\bf{57}} (1995), pages 19--31. 
		\bibitem{LK1977}
		{\sc A.J. Lawrance, N. T. Kottegoda}, Stochastic modelling of river-flow time series. {\it Journal of the Royal Statistical Society}, {\bf{140}} (1977), pages 1--31.
		\bibitem{LSB2006}
		{\sc R. Lund, Q. Shao, I. Basawa}. Parsimonious Periodic Time Series Modeling.
		{\it  Australian and New Zealand Journal of Statistics}. {\bf{48}} (2006), pages 33--47.			
		\bibitem{P1990}
		{\sc S. Porter-Hudak}, An aplication of the seasonal fractionally differenced model to the monetary aggegrates.
		{\it Journal of the American Statistical Association}, {\bf{85}} (1990), pages 338--344.	
		\bibitem{T1979} 
		{\sc B.M. Troutman},Some results in periodic autoregression. 
		{\it Biometrika}, {\bf{2}} (1979), pages 219--28.

\end{thebibliography}
\end{document}